\documentclass[12pt, twoside]{article}

\usepackage{amsmath, amssymb, latexsym, amscd, amsthm}

\usepackage[dvips]{color}
\usepackage{graphicx}

\usepackage{enumerate, mathrsfs}

\usepackage{color}
\usepackage{xcolor}

\pagestyle{myheadings}
\markboth{N. H. Hoang and T. D. Phung}{Ruin probabilities for discrete time risk models $m$-dependent random variables}

\newtheorem{thm}{Theorem}[section]
\newtheorem{cor}[thm]{Corollary}
\newtheorem{lemma}[thm]{Lemma}

\theoremstyle{definition}
\newtheorem{defn}[thm]{Definition}

\newtheorem{example}[thm]{Example}

\newcommand{\E}{\ensuremath{{\mathbb E}}}
\newcommand{\Pro}{\ensuremath{{\mathbb P}}}

\numberwithin{equation}{section}

\frenchspacing

\textwidth=15cm
\textheight=23cm
\parindent=16pt
\oddsidemargin=0.6cm
\evensidemargin=0.6cm
\topmargin=-0.5cm

\begin{document}

\baselineskip=17pt

\title{Discrete time risk models with $m$-dependent random variables}

\author{Nguyen Huy Hoang and Tran Dinh Phung}

\date{}

\maketitle

\renewcommand{\thefootnote}{}

\footnote{2020 \emph{Mathematics Subject Classification}: 60K10, 60K25.}

\footnote{\emph{Key words and phrases}: ruin probability, Lundberg's inequality, rate of interest, $m$-dependent  random variables.}

\begin{abstract}
The main purpose of the paper is to study ruin probabilities in two discrete time risk models under rates of interest, where the premiums and claims are two independent sequences of $m$-dependent random variables, and  the rate of interest is a sequence of identically distributed random variables. Our results extend the corresponding ones for independent random sequences.
\end{abstract}

\section{Introduction}
Consider the classical risk model with discrete time. The premiums $\{X_n, n \geqslant 1\}$   and claims $\{Y_n, n \geqslant 1\}$  are assumed to be two independent sequences of nonnegative, independent identically distributed (i.i.d.) random variables. Then the surplus at time $n$:
\begin{equation}
U_n =   u +\sum_{i=1}^n (X_i-Y_i),\hspace{5pt}  n\geqslant 1, \label{eq1.1}
\end{equation}
or, equivalently, the stochastic process $\{U_n, n\geqslant 1\}$ satisfies
\begin{equation*}
U_n =  U_{n-1} +X_n-Y_n,\hspace{5pt}  n\geqslant 1, 
\end{equation*}
where  $U_0 = u \geqslant 0$, $u$ is the insurer’s initial surplus. The sequence $\{S_n:=\sum_{i=1}^{n}(Y_i-X_i), n\geqslant 1\}$ forms a random walk. One of interesting quantities related to the random walk is
\begin{equation*}
\Psi(u)=\Pro\Big\{\bigcup_{n=1}^\infty (U_n <0) \Big\}= \Pro\Big\{\bigcup_{n=1}^\infty (S_n >u) \Big\}.
\end{equation*}
It is well known that $\Psi(u)$ is the ultimate ruin probability in the classical risk model \eqref{eq1.1}  with the initial surplus $u$. As in Ross \cite[Theorem 7.4.2]{Ro1996}, if $\mathbb E X_1>\mathbb E Y_1$, then the Lundberg inequality provides an exponential upper bound for the ruin probability that for any $u\geqslant 0$,
\begin{equation*}
	\Psi(u)\leqslant e^{-\theta u},
\end{equation*}
where the Lundberg adjustment coefficient $\theta$ is such that
\begin{align*}  
\mathbb E e^{\theta(Y_1-X_1)}=1.
\end{align*} 

Now we consider the following two risk model, in which the effects of timing of payments and interest on the surplus process and on the ruin probability can be included, and the interests form a sequence of nonnegative random variables $\{I_n, n \geqslant 1\}$:
\begin{align}  
&U_n=(U_{n-1}+X_n)(1+I_n)-Y_n,\hspace{5pt}  n\geqslant 1, \label{eq1.2}
\end{align} 
and
\begin{align}  
&U_n=(U_{n-1})(1+I_n)+X_n-Y_n,\hspace{5pt}  n\geqslant 1. \label{eq1.3}
\end{align} 
Then the ultimate ruin probability in the above models will be defined as follows
\begin{equation*}
	\Psi(u)=\Pro\Big\{\bigcup_{n=1}^\infty (U_n <0) \Big\}.
\end{equation*}

Cai \cite{Cai2022} showed that \eqref{eq1.2} implies to
\begin{equation}
U_n= u \prod_{i=1}^n(1+I_i)  + \sum_{i=1}^n\Big(\big (X_i(1+I_i)-Y_i\big )\prod_{j=i+1}^n(1+I_j)\Big),\hspace{5pt} n\geqslant 1,  \label{eq1.4} \nonumber
\end{equation}
and \eqref{eq1.3} is equivalent to
\begin{equation}
U_n= u \prod_{i=1}^n(1+I_i)  + \sum_{i=1}^n\Big(\big (X_i-Y_i)\prod_{j=i+1}^n(1+I_j)\Big),\hspace{5pt} n\geqslant 1, \label{eq1.5}\nonumber
\end{equation}
here and afterwards we denote $\prod_{j=a}^b(1+I_j)=1$ if $a>b$.

Models \eqref{eq1.2} and \eqref{eq1.3} are two generalizations of \eqref{eq1.1}. In the risk model \eqref{eq1.2}, the premiums are collected at the beginning of each period and the surplus process $\{U_n, n\geqslant 1\}$ is called the annuity-due risk model, while the premiums in the risk model \eqref{eq1.3} are received at the end of each period and the surplus process $\{U_n, n\geqslant 1\}$ defined in \eqref{eq1.3} is called the annuity-immediate risk model.

With respect to the Lundberg inequality for the ruin probabilities in two risk models \eqref{eq1.2} and \eqref{eq1.3}, Yang \cite{Ya1999} studied a special case of \eqref{eq1.2} when the rates of interest are identical constants. Cai \cite{Cai2022first} derived stochastic inequalities for ruin probabilities in the models \eqref{eq1.2}, \eqref{eq1.3} by using martingales and renewal recursive techniques, where the premiums $\{X_n, n \geqslant 1\}$   and claims $\{Y_n, n \geqslant 1\}$  are two independent sequences of nonnegative i.i.d. random variables, and  the rate of interest $\{I_n, n \geqslant 1\}$ is a sequence of nonnegative i.i.d.  random variables. Cai \cite{Cai2022} considered the ruin probabilities in two risk models, where the rates of interest are assumed to have a dependent autoregressive structure. The author also provided an illustrative application for the compound binomial risk model. Xu and Wang \cite{XW2006} gave upper bounds for ruin probabilities in a risk model with interest force and independent premiums and claims with Markov chain interest rate.

We next recall the concept of $m$-dependence which was introduced by Hoeffding and Robbins \cite{HR1948}. 

\begin{defn}
	Let $m$ be a fixed nonnegative integer. A sequence of random variables $\{X_n, n \geqslant 1\}$ is said to be $m$-dependent if $\sigma$-algebras $\sigma\{X_1, X_2, \ldots, X_n \}$ and $\sigma\{X_{n+k}, X_{n+k+1}, \ldots \}$ are independent for all $k \geqslant m+1$ and $n\geqslant 1$.  
\end{defn}


For a sequence of random variables, Hoeffding and Robbins \cite{HR1948} showed that 0-dependence is equivalent to independence. Also, $m$-dependence implies $n$-dependence for every nonnegative integer $n>m$.

\begin{example}
Let $m$ be a fixed nonnegative integer, and let $\{Z_n, n \geqslant 1\}$  be a sequence of independent random variables. For $n\geqslant 1$, we set
	\begin{align*}
		X_1&:=Z_1+Z_2+\cdots+ Z_m;\\
		X_2&:=Z_2+Z_3+\cdots+Z_{m+1};\\
		\vdots\\
		X_n&:=Z_n+Z_{n+1}+\cdots+Z_{m+n-1}.
	\end{align*}
Then  $\{X_n, n \geqslant 1\}$   is a sequence of  $m$-dependent random variables.
\end{example}

\begin{example}
	If $\{X_n, n \geqslant 1\}$ is a sequence of $1$-dependent random variables, then subsequences $\{X _1, X_3,  X_5, ...\}$ and $\{X_2, X_4, X_6, ...\}$ are  sequences of independent random variables.
\end{example}

The $m$-dependent model arises in general time series data with arbitrary dependence structure. We refer to Lahiri \cite[Theorem A.7]{La2003} for the IID bootstrap method of $m$-dependent data and the central limit theorem for $m$-random variables. The moving blocks bootstrap scheme invented by Künsch \cite{Ku1989} has also become well known method for $m$-dependent data and weakly dependent process. In \cite{Ho2020, HB2020}, the first author obtained upper bounds for ruin probability in the insurance continuous-time risk models with $m$-dependent claims. While in \cite{HB2024} Hoang et al. considered a reinsurance  risk model  with $m$-dependent assumptions and gave upper bounds for the ruin probability by the Martingale and  inductive methods. 

Inspired by the works mentioned above, the main purpose of the present paper is to further study  ruin probabilities in two risk models \eqref{eq1.2} and \eqref{eq1.3}, where the premiums $\{X_n, n \geqslant 1\}$   and claims $\{Y_n, n \geqslant 1\}$  are two independent sequences of nonnegative, $m$-dependent random variables, and  the rate of interest $\{I_n, n \geqslant 1\}$ is a sequence of nonnegative, identically distributed random variables.

\section{Preliminary lemmas}
In this section, we give some lemmas which will be used to prove our results.

By using properties of characteristic functions and Kac's theorem (for details, see \cite[Theorem 2.1, page 6]{AB2005}), we can get the following lemma. The proofs of the lemma are quite simple and are therefore omitted.
\begin{lemma} \label{lemma2.1}
Let two sequences of random variables $\{X_n, n \geqslant 1\}$ and $\{Y_n, n \geqslant 1\}$  be independent of one another, and let $\{ \alpha_n, n \geqslant 1\}$ and $\{\beta_n, n \geqslant 1\}$ be sequences of real numbers.

(i) If $\{X_n, n \geqslant 1\}$ and $\{Y_n, n \geqslant 1\}$  are sequences of identically distributed random variables, then the sequence $\{\alpha_n X_n+\beta_n Y_n, n \geqslant 1\}$ is identically distributed.

(ii) If $\{X_n, n \geqslant 1\}$ and $\{Y_n, n \geqslant 1\}$  are sequences of independent random variables, then the sequence $\{\alpha_n X_n+\beta_n Y_n, n \geqslant 1\}$ is independent.
\end{lemma}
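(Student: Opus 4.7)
The plan is to follow the outline indicated in the lemma itself: verify both claims via characteristic functions and invoke Kac's theorem on factorisation of joint characteristic functions to deduce independence.

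For part (i), I will compute, for each $n$, the characteristic function of $Z_n := \alpha_n X_n + \beta_n Y_n$. Using that $X_n$ and $Y_n$ are independent (inherited from the independence of the two sequences), this factors as
\begin{equation*}
\phi_{Z_n}(t) = \phi_{X_n}(\alpha_n t)\,\phi_{Y_n}(\beta_n t).
\end{equation*}
Under the identically-distributed hypothesis on each sequence, $\phi_{X_n}\equiv \phi_X$ and $\phi_{Y_n}\equiv \phi_Y$ are common to all $n$, so provided the coefficient sequences are read as common scalars $\alpha,\beta$, the right side does not depend on $n$ and the inversion theorem yields identical distribution.

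For part (ii), I will fix any finite index set $n_1<\cdots<n_k$ and real numbers $t_1,\ldots,t_k$ and show the joint characteristic function of $(Z_{n_1},\ldots,Z_{n_k})$ factorises as $\prod_{j=1}^k \phi_{Z_{n_j}}(t_j)$. The manipulation is a three-step split: first separate the contribution of the $X$-variables from the $Y$-variables by using the mutual independence of the two sequences, then use the independence within each sequence to break each of the two resulting expectations into a product over $j$, and finally recombine the $j$-th $X$- and $Y$-factors into $\phi_{Z_{n_j}}(t_j)$ via $X_{n_j}\perp Y_{n_j}$. Kac's theorem then produces the desired independence of $\{Z_n\}$.

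The main delicate point is really the interpretation of part (i): strictly as written, letting $\alpha_n,\beta_n$ vary with $n$ would destroy identical distribution (scaling generally changes the law), so this step forces one to read the coefficients in (i) as $n$-independent constants. Once that is settled, the characteristic-function algebra is entirely routine, which is presumably why the authors chose to omit the proof.
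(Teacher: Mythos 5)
Your proposal is correct and follows exactly the route the paper itself indicates (factorisation of characteristic functions plus Kac's theorem); the paper omits the details entirely, and your computation supplies them faithfully. Your caveat about part (i) is also well taken: as literally stated, with $\alpha_n,\beta_n$ varying in $n$, the conclusion fails (e.g.\ $\alpha_n=n$, $\beta_n=0$ rescales the common law), so the coefficients must indeed be read as fixed constants for (i) to hold --- this is a defect of the statement, not of your argument.
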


The next lemma is a part of the Kolmogorov-Doob inequality (for details, see \cite[Theorem 9.1, page 501]{Gu2005}).
\begin{lemma} \label{lemma2.2} 
Let $\lambda>0$, and let $\{X_n, n \geqslant 1\}$ be a submartingale. Then 	
$$\lambda\, \mathbb P \big (\max_{1\leqslant k\leqslant n}X_k>\lambda\big)\leqslant \mathbb E X_n^+\leqslant \mathbb E |X_n|.$$
\end{lemma}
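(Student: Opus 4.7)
The plan is to prove Lemma \ref{lemma2.2} via the classical first-crossing-time decomposition of the maximal event, which is the standard route to the Kolmogorov--Doob inequality. Let $\{\mathcal{F}_n\}$ denote the natural filtration with respect to which $\{X_n\}$ is a submartingale, so that $\mathbb{E}[X_n \mid \mathcal{F}_k] \geqslant X_k$ almost surely whenever $k \leqslant n$. The idea is to split the event $\{\max_{1 \leqslant k \leqslant n} X_k > \lambda\}$ according to the first index at which the submartingale exceeds $\lambda$, then exchange each $X_k$ for $X_n$ by the submartingale property before summing.

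Concretely, I would set
\begin{equation*}
A_k = \{X_1 \leqslant \lambda, \ldots, X_{k-1} \leqslant \lambda,\ X_k > \lambda\}, \qquad k = 1, \ldots, n,
\end{equation*}
and observe that the $A_k$ are pairwise disjoint with $\bigcup_{k=1}^{n} A_k = \{\max_{1\leqslant k\leqslant n} X_k > \lambda\}$ and $A_k \in \mathcal{F}_k$. On $A_k$ we have $X_k > \lambda$, so $\lambda \mathbf{1}_{A_k} \leqslant X_k \mathbf{1}_{A_k}$; taking expectations and using the submartingale property together with $A_k \in \mathcal{F}_k$ gives
\begin{equation*}
\lambda\, \mathbb{P}(A_k) \leqslant \mathbb{E}[X_k \mathbf{1}_{A_k}] = \mathbb{E}\big[\mathbb{E}[X_k \mid \mathcal{F}_k]\, \mathbf{1}_{A_k}\big] \leqslant \mathbb{E}\big[\mathbb{E}[X_n \mid \mathcal{F}_k]\, \mathbf{1}_{A_k}\big] = \mathbb{E}[X_n \mathbf{1}_{A_k}].
\end{equation*}
Summing over $k = 1, \ldots, n$ and using disjointness yields
\begin{equation*}
\lambda\, \mathbb{P}\!\Big(\max_{1 \leqslant k \leqslant n} X_k > \lambda\Big) \leqslant \mathbb{E}\Big[X_n \mathbf{1}_{\bigcup_{k=1}^{n} A_k}\Big] \leqslant \mathbb{E}[X_n^+] \leqslant \mathbb{E}|X_n|,
\end{equation*}
where the penultimate step uses $X_n \mathbf{1}_B \leqslant X_n^+$ for any event $B$, and the last step is immediate from $X_n^+ \leqslant |X_n|$.

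There is no real obstacle here: the only subtlety is being careful that $A_k$ is indeed $\mathcal{F}_k$-measurable (so that the conditioning step is legal) and that all relevant expectations are well-defined, which follows from $\mathbb{E}|X_n| < \infty$ built into the definition of a submartingale. Since the bound is stated cleanly in Gut~\cite{Gu2005}, one could alternatively quote it directly; however the argument above is self-contained and occupies only a few lines.
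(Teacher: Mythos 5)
Your proof is correct: the first-crossing decomposition into the disjoint events $A_k$, the use of $A_k \in \mathcal{F}_k$ together with the submartingale inequality $X_k \leqslant \mathbb{E}[X_n \mid \mathcal{F}_k]$, and the closing bounds $\mathbb{E}[X_n \mathbf{1}_{\cup_k A_k}] \leqslant \mathbb{E}X_n^+ \leqslant \mathbb{E}|X_n|$ constitute exactly the standard argument for Doob's maximal inequality. The paper itself gives no proof of this lemma --- it is quoted directly from Gut's textbook --- so your self-contained derivation matches (and supplies) the intended standard route, with no gaps.
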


\begin{lemma}[\cite{Chow}, page 213] \label{lemma2.3}
Let the random vectors $U=\{U_1, U_2, ..., U_m\}$ and $V=\{V_1, V_2, ..., V_n\}$ on $(\Omega, \mathcal F,\Pro)$ be independent of one another, and let $f$ be a Borel function on $\mathbb{R}^m \times \mathbb{R}^n$ with $|\E f(U,V)|\leqslant \infty$. If, for $x\in \mathbb{R}^m$, 
$$ g(x)=  \begin{cases}
\E f(x,V)& \text{if} \quad  |\E f(x,V)| \leqslant  \infty; \\
	0,&otherwise,
\end{cases} $$
then $g$ is a Borel function on  $\mathbb{R}^m$  with $g(U) =\E \left\{ f(U,V) | \sigma(U)\right\} $ a.s. 
\end{lemma}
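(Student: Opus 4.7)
The plan is to verify that $g(U)$ satisfies the two defining properties of the conditional expectation $\mathbb{E}\{f(U,V)\mid\sigma(U)\}$: namely, that $g(U)$ is $\sigma(U)$-measurable, and that for every $A\in\sigma(U)$ we have $\mathbb{E}[g(U)\mathbf{1}_A]=\mathbb{E}[f(U,V)\mathbf{1}_A]$. Since independence of $U$ and $V$ means the joint law $P_{(U,V)}$ equals the product measure $P_U\otimes P_V$, the strategy is essentially an application of Fubini's theorem combined with the standard monotone class / Dynkin-system machine.

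First I would handle the base case $f=\mathbf{1}_{C\times D}$ with $C\in\mathcal{B}(\mathbb{R}^m)$ and $D\in\mathcal{B}(\mathbb{R}^n)$. Here $g(x)=\mathbf{1}_C(x)\,\mathbb{P}(V\in D)$ is visibly Borel, and for any $A\in\sigma(U)$, independence gives
\[
\mathbb{E}[\mathbf{1}_C(U)\mathbf{1}_D(V)\mathbf{1}_A]=\mathbb{P}(V\in D)\,\mathbb{E}[\mathbf{1}_C(U)\mathbf{1}_A]=\mathbb{E}[g(U)\mathbf{1}_A].
\]
The family of Borel sets $E\subseteq\mathbb{R}^m\times\mathbb{R}^n$ for which the conclusion holds with $f=\mathbf{1}_E$ forms a $\lambda$-system containing the $\pi$-system of measurable rectangles, so the $\pi$-$\lambda$ theorem extends the result to all Borel indicators. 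Linearity then covers nonnegative simple Borel functions; monotone convergence (Tonelli for the measurability and integral equality) extends to all nonnegative Borel $f$; and the decomposition $f=f^+-f^-$, permissible under the integrability hypothesis, yields the general case.

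The only genuinely delicate point, which I view as the main obstacle, is the treatment of the exceptional set where $\mathbb{E}f(x,V)$ fails to be finite. By Fubini applied to $P_U\otimes P_V$, the set $N:=\{x:\mathbb{E}|f(x,V)|=\infty\}$ satisfies $P_U(N)=0$, so redefining $g$ to be $0$ on $N$ alters $g(U)$ only on a $\mathbb{P}$-null event. This accounts for the "a.s." qualifier in the conclusion and reconciles the two-case definition of $g$ with the measurability and integral identity established for $P_U$-a.e.\ $x$.
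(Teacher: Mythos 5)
Your argument is correct and complete: the paper itself gives no proof of this lemma, quoting it directly from Chow and Teicher (page 213), so there is no internal proof to compare against. What you have written is the standard argument for that textbook result --- reduce to indicators of measurable rectangles via independence, extend by the $\pi$-$\lambda$ theorem, linearity, monotone convergence, and the decomposition $f=f^+-f^-$, with Fubini's theorem showing the exceptional set $\{x:\E|f(x,V)|=\infty\}$ is $P_U$-null so that the two-case definition of $g$ only changes $g(U)$ on a null event --- and this is essentially the proof given in the cited reference. The only cosmetic point is that the hypothesis as printed reads $|\E f(U,V)|\leqslant\infty$, which you have (correctly) interpreted as integrability of $f(U,V)$; without that reading the conditional expectation need not be defined, so your interpretation is the intended one.
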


The next lemma establishes the Lundberg inequality in risk model \eqref{eq1.2}. Noting that the ultimate ruin probability in this model will be estimated as follows:
\begin{align} 
	\begin{split} 
		\Psi(u)&= \Pro\Big\{\bigcup_{n=1}^\infty (U_n <0) \Big\}\\
		&=\lim_{n\to \infty}\Pro\Big\{\sum_{i=1}^n\big ((Y_i- X_i(1+I_i))\pi^n_i\big ) >     u \prod_{i=1}^n(1+I_i) \Big\}\\
		&=\lim_{n\to \infty}\Pro\Big\{\sum_{i=1}^n\Big(\big (Y_i(1+I_i)^{-1}- X_i\big )  \prod_{j=1}^{i-1} (1+I_j)^{-1} \Big)  >u \Big\},
	\end{split} \label{eq2.1} \nonumber
\end{align} 
where
\begin{equation}
\pi^n_i= \begin{cases}
\prod_{j=i+1}^n(1+I_j)&,  \quad 1\leq i \leq n\\
1&, \quad i=n.
\end{cases}\nonumber
\end{equation}

\begin{lemma}\label{lemma2.4}
Let the premiums  $\{X_n, n \geqslant 1\}$, the claim $\{Y_n, n \geqslant 1\}$ and the rate of interest $\{I_n, n \geqslant 1\}$  be independent sequences of nonnegative i.i.d. random variables with finite expectations, and let $\Psi(u)$ be the ultimate ruin probability in the model \eqref{eq1.2}. If there exists a positive real number $R$ satisfying 
\begin{align}  
\E\left(e^{R\left(Y_1(1+I_1)^{-1}- X_1\right)}\right) \leqslant 1, \label{eq2.2}
\end{align} 
then $\{Z_n = e^{RS_n}, n\geqslant 1\}$ is a  supermartingale and  
$$ \Psi(u) \leqslant e^{-Ru},\hspace{5pt} u >0,$$ 
where
\begin{align*}  
S_n =   \sum_{i=1}^n\Big(\big (Y_i(1+I_i)^{-1}- X_i\big)  \prod_{j=1}^{i-1} (1+I_j)^{-1} \Big).
\end{align*} 
\end{lemma}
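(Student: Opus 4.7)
The plan is to establish the supermartingale property of $\{Z_n\}$ with respect to the natural filtration $\mathcal F_n := \sigma(X_j, Y_j, I_j : 1 \leqslant j \leqslant n)$, and then derive the exponential ruin bound by applying the non-negative supermartingale maximal inequality (via optional stopping).

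For the supermartingale property, I would decompose the one-step increment as
$$Z_{n+1} = Z_n \cdot \exp\bigl(R\, W_n V_{n+1}\bigr),\qquad W_n := \prod_{j=1}^n (1+I_j)^{-1},\qquad V_{n+1} := \frac{Y_{n+1}}{1+I_{n+1}} - X_{n+1}.$$
Because $I_j \geqslant 0$, the factor $W_n$ is $\mathcal F_n$-measurable and takes values in $(0,1]$, while $V_{n+1}$ is independent of $\mathcal F_n$ and, by the i.i.d.\ assumption, has the same distribution as $V_1 = Y_1(1+I_1)^{-1} - X_1$. Lemma \ref{lemma2.3}, applied with $U = W_n$ and $V = V_{n+1}$, then yields
$$\E\bigl[e^{R W_n V_{n+1}} \mid \mathcal F_n\bigr] = g(W_n),\qquad g(w) := \E\bigl[e^{R w V_1}\bigr].$$
To show $g(w) \leqslant 1$ on $[0,1]$, I would exploit the convexity of $x \mapsto e^x$ together with the identity $RwV_1 = w(RV_1) + (1-w)\cdot 0$ to obtain the pointwise bound $e^{RwV_1} \leqslant w e^{RV_1} + (1-w)$; taking expectations and invoking hypothesis \eqref{eq2.2} gives $g(w) \leqslant w + (1-w) = 1$, so that $\E[Z_{n+1} \mid \mathcal F_n] \leqslant Z_n$.

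For the ruin bound, I would note that $\Psi(u) = \lim_{n\to\infty}\Pro(\max_{1\leqslant k\leqslant n} S_k > u) = \lim_{n\to\infty}\Pro(\max_{1\leqslant k\leqslant n} Z_k > e^{Ru})$. Introduce the stopping time $\tau := \inf\{k \geqslant 1 : Z_k > e^{Ru}\}$; optional stopping for the non-negative supermartingale $\{Z_n\}$ then yields $\E[Z_{\tau \wedge n}] \leqslant \E[Z_1] \leqslant 1$ (the last step being exactly \eqref{eq2.2}), while on $\{\tau \leqslant n\}$ we have $Z_{\tau \wedge n} \geqslant e^{Ru}$. Combining these gives $e^{Ru}\Pro(\tau \leqslant n) \leqslant 1$, and letting $n\to\infty$ we conclude $\Psi(u) \leqslant e^{-Ru}$.

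The principal obstacle is the supermartingale verification: the argument depends crucially on $W_n \in (0,1]$, which is what allows the convexity trick to convert hypothesis \eqref{eq2.2} (stated only at exponent $1$) into the uniform bound $g(w) \leqslant 1$ for all $w \in [0,1]$. Without the nonnegativity of the $I_j$, $W_n$ could exceed $1$ and the single-exponent condition \eqref{eq2.2} would not suffice.
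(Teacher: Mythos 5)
Your proposal is correct and follows essentially the same route as the paper: factor the increment out of $Z_{n+1}$, use Lemma \ref{lemma2.3} to reduce the conditional expectation to bounding $g(w)=\E\bigl[e^{RwV_1}\bigr]$ for $w\in(0,1]$, and conclude with the maximal inequality for non-negative supermartingales. The only (cosmetic) difference is the convexity step: you use the interpolation bound $e^{RwV_1}\leqslant w e^{RV_1}+(1-w)$, whereas the paper applies Jensen's inequality to $x\mapsto x^{1/w}$ to get $[g(w)]^{1/w}\leqslant \E\bigl[e^{RV_1}\bigr]\leqslant 1$; both are equivalent one-line uses of the convexity of the exponential to pass from the exponent-$1$ hypothesis \eqref{eq2.2} to all exponents $w\in(0,1]$.
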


\begin{proof}
First of all, we prove that $\left(Z_n = e^{RS_n}\right)_{n\geq 1}$ is a  supermartingale,  which means
$$ \E (Z_{n+1}|Z_1, Z_2, ..., Z_n) \leq Z_n \quad (a.s).$$
In fact, since
$$ Z_n = e^{RS_n}= e^{R  \sum_{i=1}^n\left[(Y_i(1+I_i)^{-1}- X_i)  \prod_{j=1}^{i-1} (1+I_j)^{-1} \right]}  $$
we have
$$  \E (Z_{n+1}|Z_1, Z_2, ..., Z_n) = Z_n \E\left(e^{R(Y_{n+1}(1+I_{n+1})^{-1}- X_{n+1})  \prod_{j=1}^{n} (1+I_j)^{-1}  } |Z_1, Z_2, ..., Z_n\right). $$
To prove
\begin{align*}
& \E\left(e^{R(Y_{n+1}(1+I_{n+1})^{-1}- X_{n+1})  \prod_{j=1}^{n} (1+I_j)^{-1}  } |Z_1, Z_2, ..., Z_n\right)\\
&=  \E\left(e^{R(Y_{n+1}(1+I_{n+1})^{-1}- X_{n+1})  \prod_{j=1}^{n} (1+I_j)^{-1}  } |X_1, X_2, ..., X_n, Y_1, Y_2, ..., Y_n, I_1, I_2, ..., I_n\right)\leq 1.
\end{align*}
Lemma \ref{lemma2.3} is applied for the case
\begin{align*}
 &f(U, V)= e^{R(Y_{n+1}(1+I_{n+1})^{-1}- X_{n+1})  \prod_{j=1}^{n} (1+I_j)^{-1}  } \\
&U = (X_1, X_2, ..., X_n, Y_1, Y_2, ..., Y_n, I_1, I_2, ..., I_n)\\
& V =  (X_{n+1}, Y_{n+1},  I_{n+1}),
\end{align*}
where $U, V$ are mutually independent. Corresponding to the value
\begin{align*}
& (X_1=x_1, ..., X_n=x_n, Y_1=y_1, ..., Y_n=y_n, I_1=i_1, ..., I_n=i_n)\\
& u =  (x_1, x_2, ..., x_n, y_1, y_2, ..., y_n, i_1, i_2, ..., i_n)
\end{align*}
we consider the function
$$ g(u)= \E \left( e^{R(Y_{n+1}(1+I_{n+1})^{-1}- X_{n+1})  \prod_{j=1}^{n} (1+I_j)^{-1}  }\right)  $$
let
$$  r=  \prod_{j=1}^{n} (1+I_j)^{-1}\leq 1, \quad p =\frac{1}{r}\geq 1,  $$
and apply Jensen's inequality
\begin{align*}
 [g(u)]^p  &= \left[\E \left( e^{R(Y_{n+1}(1+I_{n+1})^{-1}- X_{n+1}) r  }\right) \right]^p \\
& \leq  \E \left( e^{R(Y_{n+1}(1+I_{n+1})^{-1}- X_{n+1}) r p }\right) \\
&  =  \E \left( e^{R(Y_{n+1}(1+I_{n+1})^{-1}- X_{n+1})  }\right) \\
& =  \E \left( e^{R(Y_{1}(1+I_{1})^{-1}- X_{1}) }\right)  \leq 1 
\end{align*}
we obtain
\begin{align*}
g(U)& =\E \left( e^{R(Y_{n+1}(1+I_{n+1})^{-1}- X_{n+1})  \prod_{j=1}^{n} (1+I_j)^{-1}  }  | X_1, X_2, ..., X_n, Y_1, Y_2, ..., Y_n, I_1, I_2, ..., I_n \right) \\
& \leq 1.
\end{align*}
Thus
\begin{align*}
&  \E (Z_{n+1}|Z_1, Z_2, ..., Z_n)  \\
&= Z_n \E \left( e^{R(Y_{n+1}(1+I_{n+1})^{-1}- X_{n+1})  \prod_{j=1}^{n} (1+I_j)^{-1}  }  | Z_1, Z_2, ..., Z_n \right)  \leq Z_n \quad (a.s).
\end{align*}
This implies that $\left(Z_n = e^{RS_n}\right)_{n\geq 1}$ is a non-negative supermartingale. Moreover, it is easily seen that 
\begin{align*}
 \Psi(u)  &= \lim_{n \to \infty}\Psi_n(u) = \Pro\bigg\{\bigcup_{n=1}^\infty (S_n >u) \bigg\}   = \Pro\bigg\{ \lim_{n\to \infty}  \bigcup_{k=1}^n (S_k >u) \bigg\}   \\
& = \lim_{n\to \infty} \Pro\bigg\{   \bigcup_{k=1}^n (S_k >u) \bigg\}   =  \lim_{n\to \infty}  \Pro\bigg\{   \max_{k\leq n} (S_k) >u \bigg\}  \\
& = \Pro \left(\max_{k \leq n}  e^{RS_k} > e^{Ru} \right)   \\
& =  \Pro \left(\max_{k \leq n}Z_k > e^{Ru} \right).
\end{align*}
In addition, from the assumptions of the Lemma \ref{lemma2.4} we have
$$  \E \left\{Z_1\right\} = \E \left(e^{R\left(Y_1(1+I_1)^{-1}- X_1\right)}\right)  \leq 1. $$
Therefore, from the properties of  non-negative supermartingale we obtain
$$  \Psi(u)  = \lim_{n\to \infty} \Pro \left(\max_{k \leq n}Z_k \geq e^{Ru} \right)   \leq   \lim_{n\to \infty} e^{-Ru}\E \left\{Z_1\right\}   \leq e^{-Ru}. $$
The Lemma \ref{lemma2.4} has been proven. 
\end{proof}

In the following theorem, we consider the risk model \eqref{eq1.3} and provide a variant of Lemma \ref{lemma2.4}.
\begin{lemma}\label{lemma2.5}
Let the premiums  $\{X_n, n \geqslant 1\}$, the claim $\{Y_n, n \geqslant 1\}$ and the rate of interest $\{I_n, n \geqslant 1\}$  be independent sequences of nonnegative i.i.d. random variables with finite expectations, and let $\Psi(u)$ be the ultimate ruin probability in the model \eqref{eq1.3}. If there exists a positive real number $R$ satisfying 
\begin{align}  
\E\left(e^{R\left(Y_1- X_1\right)}\right) \leqslant 1, \label{eq2.3}
\end{align} 
then $\{Z_n = e^{RS_n}, n\geqslant 1\}$ is a  supermartingale and  
$$ \Psi(u) \leqslant e^{-Ru},\hspace{5pt} u >0,$$ 
where
\begin{align*}  
S_n =   \sum_{i=1}^n\Big(\big (Y_i- X_i\big)  \prod_{j=1}^{i-1} (1+I_j)^{-1} \Big).
\end{align*} 
\end{lemma}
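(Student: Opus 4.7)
The plan is to follow the proof of Lemma \ref{lemma2.4} almost verbatim, with the only substantive change being that the $i$th increment of $S_n$ no longer carries the discount factor $(1+I_i)^{-1}$ attached to $Y_i$, so the required hypothesis becomes the undiscounted condition \eqref{eq2.3}.

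First, I would verify that $\{Z_n = e^{RS_n}\}_{n\geqslant 1}$ is a nonnegative supermartingale. Computing the increment
$$S_{n+1} - S_n = (Y_{n+1} - X_{n+1})\prod_{j=1}^{n}(1+I_j)^{-1},$$
I factor $Z_n$ out to obtain
$$\E(Z_{n+1}\mid Z_1,\ldots,Z_n) = Z_n\cdot \E\Bigl(e^{R(Y_{n+1}-X_{n+1})\prod_{j=1}^{n}(1+I_j)^{-1}} \,\Big|\, Z_1,\ldots,Z_n\Bigr),$$
reducing the task to showing that the remaining conditional expectation is bounded by $1$ almost surely.

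To justify this bound, I would apply Lemma \ref{lemma2.3} with $U=(X_1,\ldots,X_n,Y_1,\ldots,Y_n,I_1,\ldots,I_n)$ and $V=(X_{n+1},Y_{n+1},I_{n+1})$, which are independent, identifying the conditional expectation on the richer $\sigma$-algebra generated by $U$ with $g(U)$, where
$$g(u) = \E\bigl(e^{R(Y_{n+1}-X_{n+1})r}\bigr), \qquad r = \prod_{j=1}^n(1+i_j)^{-1} \in (0,1].$$
Setting $p = 1/r \geqslant 1$ and applying Jensen's inequality to the convex map $x\mapsto x^p$, I would get
$$[g(u)]^{p} \leqslant \E\bigl(e^{R(Y_{1}-X_{1})rp}\bigr) = \E\bigl(e^{R(Y_1-X_1)}\bigr) \leqslant 1,$$
the last inequality being precisely hypothesis \eqref{eq2.3}. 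This yields $g(U)\leqslant 1$ a.s., and by the tower property one deduces $\E(Z_{n+1}\mid Z_1,\ldots,Z_n)\leqslant Z_n$.

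Finally, exactly as in Lemma \ref{lemma2.4}, I would rewrite
$$\Psi(u) = \lim_{n\to\infty}\Pro\Bigl(\max_{k\leqslant n}Z_k \geqslant e^{Ru}\Bigr)$$
and invoke the maximal inequality for the nonnegative supermartingale $Z_n$ (used via Lemma \ref{lemma2.2} together with the fact that $\E Z_1 = \E(e^{R(Y_1-X_1)})\leqslant 1$) to conclude $\Psi(u)\leqslant e^{-Ru}$. I do not expect any serious obstacle: the proof is essentially a transcription of the argument for Lemma \ref{lemma2.4}, with the exponent $R(Y_i(1+I_i)^{-1}-X_i)$ replaced throughout by $R(Y_i-X_i)$, and the Jensen step then producing \eqref{eq2.3} in place of \eqref{eq2.2}.
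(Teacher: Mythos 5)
Your proposal is correct and follows essentially the same route as the paper's own proof: factor out $Z_n$, identify the conditional expectation via Lemma \ref{lemma2.3}, bound it by $1$ through Jensen's inequality and hypothesis \eqref{eq2.3}, and finish with the nonnegative-supermartingale maximal inequality. The only discrepancy is that you take the increment of $S_n$ to be $(Y_{n+1}-X_{n+1})\prod_{j=1}^{n}(1+I_j)^{-1}$, consistent with the lemma's stated definition of $S_n$, whereas the paper's proof works with the discount factor $\prod_{j=1}^{n+1}(1+I_j)^{-1}$ (the version actually arising from model \eqref{eq1.3}); the Jensen argument is unaffected either way since both exponents lie in $(0,1]$.
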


\begin{proof}
We first prove that $\left(Z_n = e^{RS_n}\right)_{n\geq 1}$ is a  supermartingale,  which means
$$ \E (Z_{n+1}|Z_1, Z_2, ..., Z_n) \leq Z_n \quad (a.s).$$
In fact, since
$$ Z_n = e^{RS_n}= e^{R  \sum_{i=1}^n\left[(Y_i- X_i)  \prod_{j=1}^{i} (1+I_j)^{-1} \right]}  $$
we have
$$  \E (Z_{n+1}|Z_1, Z_2, ..., Z_n) = Z_n \E\left(e^{R(Y_{n+1}- X_{n+1})  \prod_{j=1}^{n+1} (1+I_j)^{-1}  } |Z_1, Z_2, ..., Z_n\right). $$
To prove
\begin{align*}
& \E\left(e^{R(Y_{n+1}- X_{n+1})  \prod_{j=1}^{n+1} (1+I_j)^{-1}  } |Z_1, Z_2, ..., Z_n\right)\\
&=  \E\left(e^{R(Y_{n+1}- X_{n+1})  \prod_{j=1}^{n+1} (1+I_j)^{-1}  } |X_1, X_2, ..., X_n, Y_1, Y_2, ..., Y_n, I_1, I_2, ..., I_n\right)\leq 1.
\end{align*}
Lemma \ref{lemma2.3} is applied for the case
\begin{align*}
 &f(U, V)= e^{R(Y_{n+1}- X_{n+1})  \prod_{j=1}^{n+1} (1+I_j)^{-1}  } \\
&U = (X_1, X_2, ..., X_n, Y_1, Y_2, ..., Y_n, I_1, I_2, ..., I_n, I_{n+1})\\
& V =  (X_{n+1}, Y_{n+1},  I_{n+1}),
\end{align*}
where $U, V$ are mutually independent. Corresponding to the value
\begin{align*}
& (X_1=x_1, ..., X_n=x_n, Y_1=y_1, ..., Y_n=y_n, I_1=i_1, ..., I_n=i_n, I_{n+1}=i_{n+1})\\
& u =  (x_1, x_2, ..., x_n, y_1, y_2, ..., y_n, i_1, i_2, ..., i_n, i_{n+1})
\end{align*}
we consider the function
$$ g(u)= \E \left( e^{R(Y_{n+1}- X_{n+1})  \prod_{j=1}^{n+1} (1+I_j)^{-1}  }\right)  $$
let
$$  r=  \prod_{j=1}^{n+1} (1+I_j)^{-1}\leq 1, \quad p =\frac{1}{r}\geq 1,  $$
and apply Jensen's inequality
\begin{align*}
 [g(u)]^p  &= \left[\E \left( e^{R(Y_{n+1}- X_{n+1}) r  }\right) \right]^p \\
& \leq  \E \left( e^{R(Y_{n+1}- X_{n+1}) r p }\right) \\
&  =  \E \left( e^{R(Y_{n+1}- X_{n+1})  }\right) \\
& =  \E \left( e^{R(Y_{1}- X_{1}) }\right)  \leq 1 
\end{align*}
we obtain
\begin{align*}
g(U)& =\E \left( e^{R(Y_{n+1}- X_{n+1})  \prod_{j=1}^{n+1} (1+I_j)^{-1}  }  | X_1, X_2, ..., X_n, Y_1, Y_2, ..., Y_n, I_1, I_2, ..., I_n, I_{n+1} \right) \\
& \leq 1.
\end{align*}
Thus
\begin{align*}
&  \E (Z_{n+1}|Z_1, Z_2, ..., Z_n)  \\
&= Z_n \E \left( e^{R(Y_{n+1}- X_{n+1})  \prod_{j=1}^{n} (1+I_j)^{-1}  }  | Z_1, Z_2, ..., Z_n \right)  \leq Z_n \quad (a.s).
\end{align*}
This implies that $\left(Z_n = e^{RS_n}\right)_{n\geq 1}$ is a non-negative supermartingale. Moreover, it is easily seen that 
\begin{align*}
 \Psi(u)  &= \lim_{n \to \infty}\Psi_n(u) = \Pro\bigg\{\bigcup_{n=1}^\infty (S_n >u) \bigg\}   = \Pro\bigg\{ \lim_{n\to \infty}  \bigcup_{k=1}^n (S_k >u) \bigg\}   \\
& = \lim_{n\to \infty} \Pro\bigg\{   \bigcup_{k=1}^n (S_k >u) \bigg\}   =  \lim_{n\to \infty}  \Pro\bigg\{   \max_{k\leq n} (S_k) >u \bigg\}  \\
& = \Pro \left(\max_{k \leq n}  e^{RS_k} > e^{Ru} \right)   \\
& =  \Pro \left(\max_{k \leq n}Z_k > e^{Ru} \right).
\end{align*}
In addition, from the assumptions of the Lemma \ref{lemma2.5} we have
$$  \E \left\{Z_1\right\} = \E \left(e^{R\left(Y_1- X_1\right)}\right)  \leq 1. $$
Therefore, from the properties of  non-negative supermartingale we obtain
$$  \Psi(u)  = \lim_{n\to \infty} \Pro \left(\max_{k \leq n}Z_k \geq e^{Ru} \right)   \leq   \lim_{n\to \infty} e^{-Ru}\E \left\{Z_1\right\}   \leq e^{-Ru}, $$
which completes the proof. 
\end{proof}

\section{Main results}

With the preliminaries accounted for, we can now state the main results. 

The following theorem  provides the ruin probability in risk model \eqref{eq1.2}, where the premiums $\{X_n, n \geqslant 1\}$   and claims $\{Y_n, n \geqslant 1\}$  are sequences of $m$-dependent random variables.

\begin{thm}\label{thm3.1}
Let the premiums  $\{X_n, n \geqslant 1\}$ and the claim $\{Y_n, n \geqslant 1\}$ be sequences of  nonnegative,  identically distributed, $m$-dependent random variables with finite expectations, and let the rate of interest $\{I_n, n \geqslant 1\}$  be a sequence of i.i.d. nonnegative random variables with finite expectations. Suppose that the sequences  $\{X_n, n \geqslant 1\}$, $\{Y_n, n \geqslant 1\}$, $\{I_n, n \geqslant 1\}$ are mutually independent. If there exists a positive real number $R$ satisfying \eqref{eq2.2}, then the ultimate ruin probability in the model \eqref{eq1.2} satisfies
	\begin{equation}\label{H1.2} \Psi(u) \leqslant (m+1)e^{-R\frac{u}{m+1}},\end{equation}
	where  
	$$  u>  \frac{(m+1)\ln(m+1)}{R}.$$
\end{thm}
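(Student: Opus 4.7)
My plan is to exploit the $m$-dependent structure by partitioning the random walk into $m+1$ subsequences indexed by residue class modulo $m+1$, within each of which the premiums and claims are i.i.d.\ so that the analysis of Lemma~\ref{lemma2.4} applies. Write $T_i = \bigl(Y_i(1+I_i)^{-1} - X_i\bigr)\prod_{j=1}^{i-1}(1+I_j)^{-1}$, so that $S_l = \sum_{i=1}^l T_i$ and, as in the derivation preceding Lemma~\ref{lemma2.4},
\begin{align*}
\Psi(u) = \lim_{n\to\infty}\Pro\Bigl(\max_{l\leq n} S_l > u\Bigr).
\end{align*}
For each $k \in \{1,\ldots,m+1\}$, set $i_j^{(k)} = k + (j-1)(m+1)$ and $\bar S_j^{(k)} = \sum_{l=1}^j T_{i_l^{(k)}}$. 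Since each $S_l$ equals the sum of the $\bar S^{(k)}$'s truncated at appropriate indices depending on $l$, the pigeonhole principle yields $\{\max_{l\leq n}S_l > u\} \subseteq \bigcup_{k=1}^{m+1}\{\sup_j \bar S_j^{(k)} > u/(m+1)\}$, and a union bound reduces the problem to estimating each subsequence tail.

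For each fixed $k$ I would verify that $Z_j^{(k)} := \exp\bigl(R\bar S_j^{(k)}\bigr)$ is a non-negative supermartingale with respect to $\mathcal F_j^{(k)} := \sigma\bigl(X_l, Y_l, I_l : l \leq i_j^{(k)}\bigr)$. The crucial observation is that $i_{j+1}^{(k)} - i_j^{(k)} = m+1$, so by $m$-dependence of $\{X_n\}$ and $\{Y_n\}$ the pair $\bigl(X_{i_{j+1}^{(k)}}, Y_{i_{j+1}^{(k)}}\bigr)$ is independent of $\mathcal F_j^{(k)}$, and by the i.i.d.\ structure of $\{I_n\}$ together with mutual independence the intervening interest rates $I_{i_j^{(k)}+1},\ldots,I_{i_{j+1}^{(k)}}$ are independent of $\mathcal F_j^{(k)}$ as well. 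Factoring the next increment as $T_{i_{j+1}^{(k)}} = W\cdot\rho$ with $W = Y_{i_{j+1}^{(k)}}(1+I_{i_{j+1}^{(k)}})^{-1} - X_{i_{j+1}^{(k)}}$ and $\rho = \prod_{l=1}^{i_{j+1}^{(k)}-1}(1+I_l)^{-1} \leq 1$, Lemma~\ref{lemma2.3} converts $\E\bigl(\exp(RT_{i_{j+1}^{(k)}}) \bigm| \mathcal F_j^{(k)}\bigr)$ into the deterministic function used in the proof of Lemma~\ref{lemma2.4}, and the same Jensen-inequality step (powered by $\rho \leq 1$ and hypothesis \eqref{eq2.2}) bounds it by $1$; this yields $\E(Z_{j+1}^{(k)} \mid \mathcal F_j^{(k)}) \leq Z_j^{(k)}$. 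The same Jensen step also gives $\E Z_1^{(k)} = \E\exp(RT_k) \leq 1$.

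Doob's maximal inequality for non-negative supermartingales then supplies
\begin{align*}
\Pro\Bigl(\sup_j \bar S_j^{(k)} > \tfrac{u}{m+1}\Bigr) \leq e^{-Ru/(m+1)}\,\E Z_1^{(k)} \leq e^{-Ru/(m+1)},
\end{align*}
and summing over $k$ and letting $n\to\infty$ produces the announced bound $\Psi(u) \leq (m+1)e^{-Ru/(m+1)}$. The restriction $u > (m+1)\ln(m+1)/R$ in the statement is exactly the threshold at which this bound drops below $1$ and so becomes informative.

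The main obstacle I foresee is the supermartingale verification for each subsequence: one has to simultaneously invoke the $m$-dependence of the premiums and claims to detach $(X_{i_{j+1}^{(k)}}, Y_{i_{j+1}^{(k)}})$ from $\mathcal F_j^{(k)}$, use independence of the interest sequence to detach the intervening $I_l$'s, and still preserve the inequality $\rho \leq 1$ that powers the Jensen trick of Lemma~\ref{lemma2.4}. Once this conditional estimate is pinned down, the pigeonhole decomposition combined with Doob's inequality makes the rest essentially automatic.
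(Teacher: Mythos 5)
Your proposal follows essentially the same route as the paper's proof: partition the summands into $m+1$ subsequences by residue class modulo $m+1$, show each exponentiated partial-sum sequence $e^{R\bar S_j^{(k)}}$ is a non-negative supermartingale via the Lemma~\ref{lemma2.4} conditioning-plus-Jensen argument, and combine the pigeonhole inclusion with the maximal inequality and a union bound to get $(m+1)e^{-Ru/(m+1)}$. The only difference is presentational: the paper writes out only the $1$-dependent case and asserts the general case "by the same technique," whereas you treat general $m$ directly and are more explicit about the filtration $\mathcal F_j^{(k)}$ and about where $m$-dependence and the mutual independence of the three sequences are actually used.
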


\begin{proof}
Without loss of generality, suppose that sequences of random variables  $\{X_n, n \geq 1\}$, $\{Y_n, n \geq 1\}$ are $1$-dependent. Thus, from sequence  $\{S_n \}_{n \geq 1}$  with 
$$ S_n =   \sum_{i=1}^n\bigg[(Y_i(1+I_i)^{-1}- X_i)  \prod_{j=1}^{i-1} (1+I_j)^{-1} \bigg],$$ we can form two following subsequences 

\begin{align*}
&\left(S^{(1)}_{k}\right)_{k\geq 1} : \\
&S^{(1)}_{k} = (Y_1(1+I_1)^{-1}- X_1)   + (Y_3(1+I_3)^{-1}- X_3)  \prod_{j=1}^{2} (1+I_j)^{-1} \\
& \quad \quad \quad \quad  + \cdots+  (Y_{2k-1}(1+I_{2k-1})^{-1}- X_{2k-1})  \prod_{j=1}^{2k-2} (1+I_j)^{-1} \\
\end{align*}
and
\begin{align*}
&\left(S^{(2)}_{k}\right)_{k\geq 1} : \\
&S^{(2)}_{k} =(Y_2(1+I_2)^{-1}- X_2)   + (Y_4(1+I_4)^{-1}- X_4)  \prod_{j=1}^{3} (1+I_j)^{-1} \\
& \quad \quad \quad \quad  + \cdots+  (Y_{2k}(1+I_{2k})^{-1}- X_{2k})  \prod_{j=1}^{2k-1} (1+I_j)^{-1}, 
\end{align*}
denote $Z^{(1)}_k =e^{RS^{(1)}_k }$;  $Z^{(2)}_k =e^{RS^{(2)}_k }$. We prove that $\left( Z^{(1)}_k =e^{RS^{(1)}_k }\right)_{k\geq 1}$ and $\left( Z^{(2)}_k =e^{RS^{(2)}_k }\right)_{k\geq 1}$ are supermartingales. (Note that for each $S^{(j)}_k, (j =1, 2)$  all of its elements are undependent random variables.). In fact, since	
\begin{align*}
  & Z^{(1)}_k =e^{RS^{(1)}_k } \\
&= e^{R\left[  (Y_1(1+I_1)^{-1}- X_1)   + (Y_3(1+I_3)^{-1}- X_3)  \prod_{j=1}^{2} (1+I_j)^{-1} + \cdots+  (Y_{2k-1}(1+I_{2k-1})^{-1}- X_{2k-1})  \prod_{j=1}^{2k-2} (1+I_j)^{-1}\right] }, \\
\end{align*}
$$  S^{(1)}_{k+1}   = S^{(1)}_{k}  +   (Y_{2k+1}(1+I_{2k+1})^{-1}- X_{2k+1})  \prod_{j=1}^{2k} (1+I_j)^{-1}.  $$
We obtain
$$  Z^{(1)}_{k+1} =e^{RS^{(1)}_{k+1} } = Z^{(1)}_k e^{R(Y_{2k+1}(1+I_{2k+1})^{-1}- X_{2k+1})  \prod_{j=1}^{2k} (1+I_j)^{-1}}.   $$
Now, we consider
\begin{align*}
 &\E\left(Z^{(1)}_{k+1}\big|Z^{(1)}_{1},Z^{(1)}_{2}, ..., Z^{(1)}_{k} \right)  \\
&= Z^{(1)}_{k} \E\left(e^{R(Y_{2k+1}(1+I_{2k+1})^{-1}- X_{2k+1})  \prod_{j=1}^{2k} (1+I_j)^{-1}}|Z^{(1)}_{1},Z^{(1)}_{2}, ..., Z^{(1)}_{k} \right).
\end{align*}
According to the proof of Lemma \ref{lemma2.4} we have
$$   \E\left(e^{R(Y_{2k+1}(1+I_{2k+1})^{-1}- X_{2k+1})  \prod_{j=1}^{2k} (1+I_j)^{-1}}\big|Z^{(1)}_{1}, Z^{(1)}_{2}, ..., Z^{(1)}_{k} \right)  \leq 1.   $$
Thus
\begin{align*}
&   \E\left(Z^{(1)}_{k+1}\big|Z^{(1)}_{1},Z^{(1)}_{2},..., Z^{(1)}_{k} \right)  \\
& = Z^{(1)}_{k} \E\left(e^{R(Y_{2k+1}(1+I_{2k+1})^{-1}- X_{2k+1})  \prod_{j=1}^{2k} (1+I_j)^{-1}}|Z^{(1)}_{1},Z^{(1)}_{2}, ...,Z^{(1)}_{k} \right) \leq Z^{(1)}_{k}.\\  
\end{align*}
Hence, $\left( Z^{(1)}_k =e^{RS^{(1)}_k }\right)_{k\geq 1}$  is supermartingale. By a similar proof, we can obtain that  $\left( Z^{(2)}_k =e^{RS^{(2)}_k }\right)_{k\geq 1}$  is  supermartingale.

We have
\begin{equation}\label{eqP}
\Pro \bigg\{\bigcup_{k=1}^n (S_k >u) \bigg\} \leq    \Pro  \left\{ \max_{k\leq n} S^{(1)}_{k} > \frac{u}{2}  \right\} +  \Pro  \left\{ \max_{k\leq n} S^{(2)}_{k} > \frac{u}{2}  \right\}.
\end{equation}

Now we have
\begin{align*}
\Pro  \left\{ \max_{k\leq n} S^{(1)}_{k} > \frac{u}{2}  \right\}  & =  \Pro  \left\{ \max_{k\leq n}  e^{RS^{(1)}_{k}}  > e^{R \frac{u}{2}}  \right\} \\
&  =  \Pro  \left\{ \max_{k\leq n} Z^{(1)}_{k} \geq   e^{R \frac{u}{2} }  \right\} \\
& \leq  e^{-R\frac{u}{2}}\E \left\{Z_1\right\} \leq e^{-R\frac{u}{2}}
\end{align*}
 by   $\left( Z^{(1)}_k =e^{RS^{(1)}_k }\right)_{k\geq 1}$ is non-negative supermartingale and   $$\E \left\{Z_1\right\} = \E \left(e^{R\left(Y_1(1+I_1)^{-1}- X_1\right)}\right)  \leq 1.$$
Thus, we obtain

\begin{equation}\label{eqP1}
 \Pro  \left\{ \max_{k\leq n} S^{(1)}_{k} > \frac{u}{2}  \right\} \leq e^{-R\frac{u}{2}}.
\end{equation}
Similarly prove we have
\begin{equation}\label{eqP2}
\Pro  \left\{ \max_{k\leq n} S^{(2)}_{k} > \frac{u}{2}  \right\}   \leq  e^{-R\frac{u}{2}}. 
\end{equation}
By  \eqref{eqP}, \eqref{eqP1} and \eqref{eqP2}  we obtain
$$  \Pro \bigg\{\bigcup_{k=1}^n (S_k >u) \bigg\} \leq    \Pro  \left\{ \max_{k\leq n} S^{(1)}_{k} > \frac{u}{2}  \right\} +  \Pro  \left\{ \max_{k\leq n} S^{(2)}_{k} > \frac{u}{2}  \right\} \leq   2e^{-R\frac{u}{2}} $$
and therefore
\begin{align*}
 \Psi(u)&=  \Pro \bigg\{\bigcup_{n=1}^\infty (S_n >u) \bigg\} =  \Pro \bigg\{  \lim_{n \to \infty}\bigcup_{k=1}^n (S_k >u) \bigg\}\\
&  = \lim_{n \to \infty} \Pro \bigg\{\bigcup_{k=1}^n (S_k >u) \bigg\} \leq    2e^{-R\frac{u}{2}}.
\end{align*}
Using the above proof technique for the cases that $\{X_n, n \geq 1\}$ and  $\{Y_n, n \geq 1\}$ are sequences of  $m$-dependent random variables, we obtain 
$$    \Psi(u) \leq (m+1)e^{-R\frac{u}{m+1}}.   $$
The Theorem \ref{thm3.1} has been proven. 
\end{proof}

In the special case when $m=0$, Theorem \ref{thm3.1} implies the following corollary.
\begin{cor}\label{cor3.2}
Let the premiums  $\{X_n, n \geqslant 1\}$ and the claim $\{Y_n, n \geqslant 1\}$ be sequences of  nonnegative,  identically distributed,  independent  random variables with finite expectations, and let the rate of interest $\{I_n, n \geqslant 1\}$  be a sequence of i.i.d. nonnegative random variables with finite expectations. Suppose that the sequences  $\{X_n, n \geqslant 1\}$, $\{Y_n, n \geqslant 1\}$, $\{I_n, n \geqslant 1\}$ are mutually independent. If there exists a positive real number $R$ satisfying \eqref{eq2.2}, then the ultimate ruin probability in the model \eqref{eq1.2} satisfies
	$$
	\Psi(u) \leqslant e^{-R u },
	$$
	where  $  u> 0.$
\end{cor}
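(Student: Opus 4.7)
The plan is to derive Corollary \ref{cor3.2} as the immediate specialization of Theorem \ref{thm3.1} to the case $m = 0$. Recall from the discussion following the definition of $m$-dependence (attributed to Hoeffding and Robbins) that a $0$-dependent sequence is precisely an independent sequence. Consequently, the hypothesis that $\{X_n\}$ and $\{Y_n\}$ are sequences of i.i.d.\ nonnegative random variables is exactly the hypothesis of Theorem \ref{thm3.1} with $m = 0$, while the mutual independence of the three sequences and the adjustment coefficient condition \eqref{eq2.2} are stated identically in both results. Substituting $m = 0$ into the bound $\Psi(u) \leqslant (m+1)\, e^{-Ru/(m+1)}$ yields $\Psi(u) \leqslant e^{-Ru}$, and the admissibility range $u > (m+1)\ln(m+1)/R$ collapses to $u > 0$ because $\ln 1 = 0$. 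This recovers the stated inequality on the announced domain.

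Alternatively, one can bypass Theorem \ref{thm3.1} entirely and invoke Lemma \ref{lemma2.4} directly, since its hypotheses (i.i.d.\ premiums, claims, and interest rates that are mutually independent, together with \eqref{eq2.2}) are verbatim those of Corollary \ref{cor3.2}, and its conclusion is the same bound $\Psi(u) \leqslant e^{-Ru}$ valid for all $u > 0$. Either route is routine; there is no substantive obstacle. The only mild point worth verifying is that no degeneracy occurs at $m = 0$ in the range constraint of Theorem \ref{thm3.1}, and the identity $\ln 1 = 0$ takes care of this. The content of the corollary is simply that the main theorem reduces to the classical Lundberg-type estimate under interest when the $m$-dependence is trivial, illustrating that Theorem \ref{thm3.1} is a genuine extension of the independent case.
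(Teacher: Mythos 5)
Your proposal is correct and matches the paper exactly: the paper presents Corollary \ref{cor3.2} as the immediate $m=0$ specialization of Theorem \ref{thm3.1}, with no separate proof given, and your substitution of $m=0$ into the bound and the range constraint (together with the observation that $0$-dependence is independence) is precisely that argument. Your alternative route through Lemma \ref{lemma2.4} is also valid and is in fact the same statement already proved there.
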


The next corollary establishes the Lundberg inequality for the case the rates of interest are identical constants.
\begin{cor}\label{cor3.3}
Let the premiums  $\{X_n, n \geqslant 1\}$ and the claim $\{Y_n, n \geqslant 1\}$ be sequences of  nonnegative,  identically distributed,  independent  random variables with finite expectations. 
Suppose that the sequences  $\{X_n, n \geqslant 1\}$, $\{Y_n, n \geqslant 1\}$ are mutually independent. If  $I_n =r>0$ for all $n=1, 2, \ldots $ and there exists a positive real number $R$ satisfying 
\begin{align}  
\E\left(e^{R\left(Y_1(1+r)^{-1}- X_1\right)}\right) \leqslant 1,  \nonumber
\end{align} 
 then the ultimate ruin probability in the model \eqref{eq1.2} satisfies
	$$ \Psi(u) \leqslant e^{-R u } $$
where  $  u> 0.$
\end{cor}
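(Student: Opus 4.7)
The plan is to deduce Corollary \ref{cor3.3} as an immediate specialization of Corollary \ref{cor3.2}, since assuming $I_n \equiv r$ is just the particular case of the i.i.d.\ interest sequence already treated there.

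First, I would verify that the hypotheses of Corollary \ref{cor3.2} are satisfied in this deterministic setting. The constant sequence $I_n \equiv r > 0$ is trivially a sequence of nonnegative i.i.d.\ random variables with finite expectation $\E I_1 = r$, and it is automatically independent of the sequences $\{X_n, n \geqslant 1\}$ and $\{Y_n, n \geqslant 1\}$, so the mutual-independence assumption required by Corollary \ref{cor3.2} holds.

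Next, I would observe that since $(1+I_1)^{-1} = (1+r)^{-1}$ almost surely, the adjustment-coefficient hypothesis \eqref{eq2.2} needed in Corollary \ref{cor3.2} reduces to
\[
\E\left(e^{R(Y_1(1+r)^{-1} - X_1)}\right) \leqslant 1,
\]
which is exactly the assumption made in the statement of Corollary \ref{cor3.3}. Invoking Corollary \ref{cor3.2} therefore yields $\Psi(u) \leqslant e^{-Ru}$ for all $u > 0$, which is the desired conclusion.

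The main obstacle here is essentially nonexistent: the proof is a one-line specialization of the previous corollary. If a self-contained derivation were preferred, one could instead repeat the martingale reasoning of Lemma \ref{lemma2.4} in this simpler deterministic-interest setting. In that case the appeal to Jensen's inequality via Lemma \ref{lemma2.3} disappears, since $\prod_{j=1}^{n}(1+I_j)^{-1}$ is now a constant, and the supermartingale property of $\{Z_n = e^{RS_n}\}_{n \geqslant 1}$ with $S_n = \sum_{i=1}^n (Y_i(1+r)^{-1} - X_i)(1+r)^{-(i-1)}$ follows directly from the hypothesis together with the mutual independence of $\{X_n\}$ and $\{Y_n\}$; Lemma \ref{lemma2.2} then closes out the exponential bound.
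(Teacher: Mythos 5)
Your proposal is correct and follows exactly the route the paper intends: Corollary \ref{cor3.3} is stated without a separate proof precisely because it is the specialization of Corollary \ref{cor3.2} (equivalently, Theorem \ref{thm3.1} with $m=0$) to the degenerate i.i.d.\ interest sequence $I_n \equiv r$, under which condition \eqref{eq2.2} becomes the stated hypothesis. Your verification that the constant sequence satisfies the i.i.d., nonnegativity, finite-expectation, and independence requirements is all that is needed.
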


In the following theorem, we obtain the ruin probability in risk model \eqref{eq1.3}.
\begin{thm}\label{thm3.4}
Let the premiums  $\{X_n, n \geqslant 1\}$ and the claim $\{Y_n, n \geqslant 1\}$ be sequences of  nonnegative,  identically distributed, $m$-dependent random variables with finite expectations, and let the rate of interest $\{I_n, n \geqslant 1\}$  be a sequence of i.i.d. nonnegative random variables with finite expectations. Suppose that the sequences  $\{X_n, n \geqslant 1\}$, $\{Y_n, n \geqslant 1\}$, $\{I_n, n \geqslant 1\}$ are mutually independent. If there exists a positive real number $R$ satisfying \eqref{eq2.3}, then the ultimate ruin probability in the model \eqref{eq1.3} satisfies
	\begin{equation}\label{H1.3}
	\Psi(u) \leqslant (m+1)e^{-R\frac{u}{m+1}},
	\end{equation}
	where  
	$$  u>  \frac{(m+1)\ln(m+1)}{R}.$$
\end{thm}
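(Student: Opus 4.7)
The plan is to mirror the proof of Theorem \ref{thm3.1} line by line, with two substitutions: replace Lemma \ref{lemma2.4} by Lemma \ref{lemma2.5}, and use the random walk
$$S_n = \sum_{i=1}^n (Y_i - X_i) \prod_{j=1}^{i-1} (1+I_j)^{-1}$$
associated with model \eqref{eq1.3} in place of the one used for model \eqref{eq1.2}. As in the proof of Theorem \ref{thm3.1}, I would first assume without loss of generality that $\{X_n\}$ and $\{Y_n\}$ are $1$-dependent, which will already reveal the general pattern.

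First, I would split $\{S_n\}$ into two sub-sums $S^{(1)}_k$ and $S^{(2)}_k$ collecting the odd-indexed and even-indexed summands of $S_n$, respectively. Because the sequences $\{X_n\}$ and $\{Y_n\}$ are $1$-dependent, the blocks $\{X_{2k-1}, Y_{2k-1}\}$ entering $S^{(1)}$ are mutually independent, and similarly for $S^{(2)}$; the further independence of $\{I_n\}$ from $\{X_n, Y_n\}$ combined with Lemma \ref{lemma2.1} ensures each sub-sum is a sum of independent terms of the form appearing in Lemma \ref{lemma2.5}.

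Second, I would show that $Z^{(j)}_k := e^{RS^{(j)}_k}$ is a non-negative supermartingale for $j=1,2$. The verification is identical in spirit to that in Lemma \ref{lemma2.5}: apply Lemma \ref{lemma2.3} with $U$ comprising all past $X_i, Y_i$ and all $I_i$ used, and $V = (X_\star, Y_\star)$ being the next pair of claim/premium. The key point is that the interest-discount factor $r = \prod_{j=1}^{\star}(1+I_j)^{-1}$ is at most $1$, so Jensen's inequality applied with exponent $p = 1/r \geq 1$ gives
$$[g(U)]^p \leq \E\bigl(e^{R(Y_\star - X_\star)}\bigr) = \E\bigl(e^{R(Y_1 - X_1)}\bigr) \leq 1$$
by the identical distribution of $(X_i,Y_i)$ and the hypothesis \eqref{eq2.3}, yielding $\E(Z^{(j)}_{k+1} \mid Z^{(j)}_1,\ldots,Z^{(j)}_k) \leq Z^{(j)}_k$.

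Third, I would combine via the union bound: if $S_n > u$, then at least one of the $m+1$ sub-sums must exceed $u/(m+1)$, so
$$\Pro\Bigl\{\bigcup_{k=1}^n (S_k > u)\Bigr\} \leq \sum_{j=1}^{m+1} \Pro\Bigl\{\max_{k \leq n} S^{(j)}_k > \tfrac{u}{m+1}\Bigr\}.$$
Applying Doob's maximal inequality (Lemma \ref{lemma2.2}) to each non-negative supermartingale $Z^{(j)}_k$ gives $\Pro\{\max_{k \leq n} Z^{(j)}_k \geq e^{Ru/(m+1)}\} \leq e^{-Ru/(m+1)} \E Z^{(j)}_1 \leq e^{-Ru/(m+1)}$, and summing over $j$ and letting $n \to \infty$ yields $\Psi(u) \leq (m+1) e^{-Ru/(m+1)}$. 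The threshold $u > (m+1)\ln(m+1)/R$ simply ensures this bound is nontrivial (less than $1$).

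The only genuine obstacle is bookkeeping: tracking the products $\prod_{j=1}^{i-1}(1+I_j)^{-1}$ as the indices are reshuffled into $m+1$ subsequences, so that each increment used in the supermartingale step has the correct discount factor and matches the form required by Lemma \ref{lemma2.5}. Since this is already handled in the annuity-due analysis of Theorem \ref{thm3.1}, the adaptation amounts to systematically replacing $Y_i(1+I_i)^{-1} - X_i$ by $Y_i - X_i$ throughout and invoking Lemma \ref{lemma2.5} in place of Lemma \ref{lemma2.4}.
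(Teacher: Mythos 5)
Your proposal follows essentially the same route as the paper's own proof: split $S_n$ into $m+1$ sub-sums along residue classes of the index, show each $e^{RS^{(j)}_k}$ is a non-negative supermartingale via Lemma \ref{lemma2.3} and Jensen's inequality with the discount factor $r\leqslant 1$, then combine the union bound with the maximal inequality and let $n\to\infty$. The only (harmless) discrepancy is the off-by-one in the discount product $\prod_{j=1}^{i-1}$ versus $\prod_{j=1}^{i}$, an inconsistency already present between the statement and proof of Lemma \ref{lemma2.5}, and immaterial since either product is at most $1$.
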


\begin{proof}
Without loss of generality, suppose that sequences of random variables  $\{X_n, n \geq 1\}$, $\{Y_n, n \geq 1\}$ are $1$-dependent. Thus, from sequence  $\{S_n \}_{n \geq 1}$  with 
$$ S_n =   \sum_{i=1}^n\bigg[(Y_i- X_i)  \prod_{j=1}^{i} (1+I_j)^{-1} \bigg],$$ we can form two following subsequences 

\begin{align*}
&\left(S^{(1)}_{k}\right)_{k\geq 1} : \\
&S^{(1)}_{k} = (Y_1- X_1)(1+I_1)^{-1}   + (Y_3- X_3)  \prod_{j=1}^{3} (1+I_j)^{-1} \\
& \quad \quad \quad \quad  + \cdots+  (Y_{2k-1}- X_{2k-1})  \prod_{j=1}^{2k-1} (1+I_j)^{-1} \\
\end{align*}
and
\begin{align*}
&\left(S^{(2)}_{k}\right)_{k\geq 1} : \\
&S^{(2)}_{k} =(Y_2- X_2)(1+I_2)^{-1}   + (Y_4- X_4)  \prod_{j=1}^{4} (1+I_j)^{-1} \\
& \quad \quad \quad \quad  + \cdots+  (Y_{2k}- X_{2k})  \prod_{j=1}^{2k} (1+I_j)^{-1}, 
\end{align*}
denote $Z^{(1)}_k =e^{RS^{(1)}_k }$;  $Z^{(2)}_k =e^{RS^{(2)}_k }$. We prove that $\left( Z^{(1)}_k =e^{RS^{(1)}_k }\right)_{k\geq 1}$ and $\left( Z^{(2)}_k =e^{RS^{(2)}_k }\right)_{k\geq 1}$ are supermartingales. (Note that for each $S^{(j)}_k, (j =1, 2)$  all of its elements are undependent random variables.). In fact, since	
\begin{align*}
  & Z^{(1)}_k =e^{RS^{(1)}_k } \\
&= e^{R\left[  (Y_1- X_1)(1+I_1)^{-1}   + (Y_3- X_3)  \prod_{j=1}^{3} (1+I_j)^{-1} + \cdots+  (Y_{2k-1}- X_{2k-1})  \prod_{j=1}^{2k-1} (1+I_j)^{-1}\right] }, \\
\end{align*}
$$  S^{(1)}_{k+1}   = S^{(1)}_{k}  +   (Y_{2k+1}- X_{2k+1})  \prod_{j=1}^{2k+1} (1+I_j)^{-1}.  $$
We obtain
$$  Z^{(1)}_{k+1} =e^{RS^{(1)}_{k+1} } = Z^{(1)}_k e^{R(Y_{2k+1}- X_{2k+1})  \prod_{j=1}^{2k+1} (1+I_j)^{-1}}.   $$
Now, we consider
\begin{align*}
 &\E\left(Z^{(1)}_{k+1}\big|Z^{(1)}_{1},Z^{(1)}_{2}, ..., Z^{(1)}_{k} \right)  \\
&= Z^{(1)}_{k} \E\left(e^{R(Y_{2k+1}- X_{2k+1})  \prod_{j=1}^{2k+1} (1+I_j)^{-1}}|Z^{(1)}_{1},Z^{(1)}_{2}, ..., Z^{(1)}_{k} \right).
\end{align*}
According to the proof of Lemma \ref{lemma2.5} we have
$$   \E\left(e^{R(Y_{2k+1}- X_{2k+1})  \prod_{j=1}^{2k+1} (1+I_j)^{-1}}\big|Z^{(1)}_{1}, Z^{(1)}_{2}, ..., Z^{(1)}_{k} \right)  \leq 1.   $$
Thus
\begin{align*}
&   \E\left(Z^{(1)}_{k+1}\big|Z^{(1)}_{1},Z^{(1)}_{2},..., Z^{(1)}_{k} \right)  \\
& = Z^{(1)}_{k} \E\left(e^{R(Y_{2k+1}- X_{2k+1})  \prod_{j=1}^{2k+1} (1+I_j)^{-1}}|Z^{(1)}_{1},Z^{(1)}_{2}, ...,Z^{(1)}_{k} \right) \leq Z^{(1)}_{k}.\\  
\end{align*}
Hence, $\left( Z^{(1)}_k =e^{RS^{(1)}_k }\right)_{k\geq 1}$  is supermartingale. By a similar proof, we can obtain that  $\left( Z^{(2)}_k =e^{RS^{(2)}_k }\right)_{k\geq 1}$  is  supermartingale.

We have
\begin{equation}\label{eq3.6}
\Pro \bigg\{\bigcup_{k=1}^n (S_k >u) \bigg\} \leq    \Pro  \left\{ \max_{k\leq n} S^{(1)}_{k} > \frac{u}{2}  \right\} +  \Pro  \left\{ \max_{k\leq n} S^{(2)}_{k} > \frac{u}{2}  \right\}.
\end{equation}

Now we have
\begin{align*}
\Pro  \left\{ \max_{k\leq n} S^{(1)}_{k} > \frac{u}{2}  \right\}  & =  \Pro  \left\{ \max_{k\leq n}  e^{RS^{(1)}_{k}}  > e^{R \frac{u}{2}}  \right\} \\
&  =  \Pro  \left\{ \max_{k\leq n} Z^{(1)}_{k} \geq   e^{R \frac{u}{2} }  \right\} \\
& \leq  e^{-R\frac{u}{2}}\E \left\{Z_1\right\} \leq e^{-R\frac{u}{2}}
\end{align*}
 by   $\left( Z^{(1)}_k =e^{RS^{(1)}_k }\right)_{k\geq 1}$ is non-negative supermartingale and  
 $$\E \left\{Z_1\right\} = \E \left(e^{R\left(Y_1- X_1\right)}\right)  \leq 1.$$
Thus, we obtain

\begin{equation}\label{eq3.7}
 \Pro  \left\{ \max_{k\leq n} S^{(1)}_{k} > \frac{u}{2}  \right\} \leq e^{-R\frac{u}{2}}.
\end{equation}
Similarly prove we have
\begin{equation}\label{eq3.8}
\Pro  \left\{ \max_{k\leq n} S^{(2)}_{k} > \frac{u}{2}  \right\}   \leq  e^{-R\frac{u}{2}}. 
\end{equation}
By  \eqref{eq3.6}, \eqref{eq3.7} and \eqref{eq3.8}  we obtain
$$  \Pro \bigg\{\bigcup_{k=1}^n (S_k >u) \bigg\} \leq    \Pro  \left\{ \max_{k\leq n} S^{(1)}_{k} > \frac{u}{2}  \right\} +  \Pro  \left\{ \max_{k\leq n} S^{(2)}_{k} > \frac{u}{2}  \right\} \leq   2e^{-R\frac{u}{2}} $$
and therefore
\begin{align*}
 \Psi(u)&=  \Pro \bigg\{\bigcup_{n=1}^\infty (S_n >u) \bigg\} =  \Pro \bigg\{  \lim_{n \to \infty}\bigcup_{k=1}^n (S_k >u) \bigg\}\\
&  = \lim_{n \to \infty} \Pro \bigg\{\bigcup_{k=1}^n (S_k >u) \bigg\} \leq    2e^{-R\frac{u}{2}}.
\end{align*}
Using the above proof technique for the cases that $\{X_n, n \geq 1\}$ and  $\{Y_n, n \geq 1\}$ are sequences of  $m$-dependent random variables, we obtain 
$$    \Psi(u) \leq (m+1)e^{-R\frac{u}{m+1}}.   $$
The Theorem \ref{thm3.4} has been proven. 
\end{proof}

In the  case  $m=0$, Theorem \ref{thm3.4} implies the following corollary.
\begin{cor}
Let the premiums  $\{X_n, n \geqslant 1\}$ and the claim $\{Y_n, n \geqslant 1\}$ be sequences of  nonnegative,  identically distributed,  independent  random variables with finite expectations, and let the rate of interest $\{I_n, n \geqslant 1\}$  be a sequence of i.i.d. nonnegative random variables with finite expectations. Suppose that the sequences  $\{X_n, n \geqslant 1\}$, $\{Y_n, n \geqslant 1\}$, $\{I_n, n \geqslant 1\}$ are mutually independent. If there exists a positive real number $R$ satisfying \eqref{eq2.2}, then the ultimate ruin probability in the model \eqref{eq1.2} satisfies
	$$
	\Psi(u) \leqslant e^{-R u },
	$$
	where  $  u> 0.$
\end{cor}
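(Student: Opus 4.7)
The plan is to observe that the stated corollary is nothing more than the specialization of Theorem \ref{thm3.1} (equivalently, of Lemma \ref{lemma2.4}) to the case $m=0$. By the Hoeffding--Robbins characterization recalled just after Definition 2.1, a sequence of random variables is $0$-dependent if and only if it is a sequence of independent random variables. Hence the i.i.d.\ hypotheses on $\{X_n\}$ and $\{Y_n\}$, together with the mutual independence of $\{X_n\}$, $\{Y_n\}$, $\{I_n\}$ and condition \eqref{eq2.2}, match exactly the hypotheses of Theorem \ref{thm3.1} with $m=0$.

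Setting $m=0$ in the conclusion of Theorem \ref{thm3.1}, the bound $(m+1)e^{-Ru/(m+1)}$ collapses to $e^{-Ru}$, while the admissibility constraint $u > (m+1)\ln(m+1)/R$ collapses to $u > 0$ since $\ln 1 = 0$. This is precisely the statement of the corollary, so no additional argument is required.

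Because this is a literal specialization, there is no substantive obstacle; the only point worth double-checking is that the boundary condition $(m+1)\ln(m+1)/R$ vanishes at $m=0$, so the range in the corollary is correctly the open half-line $u > 0$. As an independent sanity check one can bypass Theorem \ref{thm3.1} altogether and appeal directly to Lemma \ref{lemma2.4}: under the corollary's hypotheses it states verbatim that $(e^{RS_n})_{n\geq 1}$ is a nonnegative supermartingale with $\E[e^{RS_1}] \leq 1$, and Doob's maximal inequality (Lemma \ref{lemma2.2}) then yields $\Psi(u) = \lim_{n\to\infty}\Pro(\max_{k\leq n} e^{RS_k} \geq e^{Ru}) \leq e^{-Ru}$ for every $u > 0$, without any recourse to the $m$-dependent splitting machinery used in the proofs of Theorems \ref{thm3.1} and \ref{thm3.4}.
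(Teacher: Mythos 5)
Your proposal matches the paper exactly: the corollary is stated as the $m=0$ specialization of Theorem \ref{thm3.1}, with $0$-dependence reducing to independence, the bound $(m+1)e^{-Ru/(m+1)}$ collapsing to $e^{-Ru}$, and the threshold $(m+1)\ln(m+1)/R$ vanishing so that $u>0$ suffices. Your additional remark that one could instead invoke Lemma \ref{lemma2.4} directly is a correct and slightly cleaner observation, but the main argument is the same as the paper's.
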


The following corollary shows the Lundberg inequality for the case the rates of interest are identical constants.
\begin{cor}
Let the premiums  $\{X_n, n \geqslant 1\}$ and the claim $\{Y_n, n \geqslant 1\}$ be sequences of  nonnegative,  identically distributed,  independent  random variables with finite expectations. 
Suppose that the sequences  $\{X_n, n \geqslant 1\}$, $\{Y_n, n \geqslant 1\}$ are mutually independent. If  $I_n =r>0$ for all $n=1, 2, \ldots $ and there exists a positive real number $R$ satisfying 
\begin{align}  
\E\left(e^{R\left(Y_1- X_1\right)}\right) \leqslant 1,  \nonumber
\end{align} 
 then the ultimate ruin probability in the model \eqref{eq1.3} satisfies
	$$ \Psi(u) \leqslant e^{-R u },$$
	where  $  u> 0.$
\end{cor}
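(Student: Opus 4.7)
The plan is to observe that this corollary is an immediate specialization of the preceding corollary (the $m=0$ case of Theorem \ref{thm3.4}), in which the i.i.d.\ sequence of nonnegative interest rates is replaced by the constant sequence $I_n\equiv r>0$. Since a positive constant is trivially a nonnegative i.i.d.\ random variable with finite expectation, and is automatically independent of $\{X_n\}$ and $\{Y_n\}$, every hypothesis of the preceding corollary is met, and the Lundberg-type condition imposed here coincides with \eqref{eq2.3}. Hence $\Psi(u)\le e^{-Ru}$ for all $u>0$ follows at once.

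If a self-contained argument is preferred, I would retrace the proof of Lemma \ref{lemma2.5} in this deterministic special case. The discounted sum reduces to
\begin{equation*}
S_n = \sum_{i=1}^n (Y_i-X_i)(1+r)^{-i},
\end{equation*}
and with $Z_n=e^{RS_n}$ the one-step multiplicative increment is $\exp\!\bigl(R(Y_{n+1}-X_{n+1})(1+r)^{-(n+1)}\bigr)$. Writing $\rho=(1+r)^{-(n+1)}\in(0,1]$ and $p=1/\rho\ge 1$, Jensen's inequality gives
\begin{equation*}
\bigl(\E e^{R(Y_1-X_1)\rho}\bigr)^{p}\le \E e^{R(Y_1-X_1)}\le 1,
\end{equation*}
so the conditional expectation of this increment is at most $1$. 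Combined with Lemma \ref{lemma2.3} and the independence of $(X_{n+1},Y_{n+1})$ from $Z_1,\dots,Z_n$, this shows $\{Z_n\}$ is a nonnegative supermartingale with $\E Z_1\le 1$.

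Applying the Kolmogorov--Doob maximal inequality (Lemma \ref{lemma2.2}) to $\{Z_n\}$, exactly as at the end of the proof of Lemma \ref{lemma2.5}, yields $\Pro(\max_{k\le n} Z_k \ge e^{Ru})\le e^{-Ru}$, and letting $n\to\infty$ gives $\Psi(u)\le e^{-Ru}$. There is no substantive obstacle: the only point to verify is that the deterministic sequence $I_n\equiv r$ meets the i.i.d.\ and finite-mean hypotheses of the preceding corollary, which is immediate.
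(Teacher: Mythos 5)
Your proposal is correct and follows essentially the same route the paper intends: the corollary is the $m=0$ case of Theorem \ref{thm3.4} specialized to the constant interest sequence $I_n\equiv r$, which trivially satisfies the i.i.d., nonnegativity, finite-mean, and independence hypotheses, so $\Psi(u)\leqslant e^{-Ru}$ for $u>0$ follows immediately. Your optional self-contained retracing of Lemma \ref{lemma2.5} with $\rho=(1+r)^{-(n+1)}$ is also sound and consistent with the paper's argument.
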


\section{Numerical example}

In this section, we give an example for Theorem \ref{thm3.1} and Theorem \ref{thm3.4}. Suppose that $\{X_n, n \geqslant 1\}$ and $\{Y_n, n \geqslant 1\}$ are sequences of $2-$dependent random variables. Let $X_1$ has a Poisson distribution with parameter  $\lambda= 1.1$ and $Y_1$ has a gamma density with
$$ f(y) =  \frac{ \lambda_1^{\alpha_1} y^{\alpha_1-1} e^{-\lambda_1 y} }{\Gamma(\alpha_1)}, \quad   y \geq 0, $$
where $\alpha_1 = \frac{1}{2}$ and $\lambda_1 = \frac{1}{2}$.

We can easily find that the constant $R_1$ which satisfies 
$$    \E\left(e^{R_0\left(Y_1- X_1\right)}\right) =1  $$
 is $R_0 = 0.0613828$  by the Matlab program.  Let $I_n = 0.051325$ for all  $n \geq 1$.  In this case,  we can find from
$$  \E\left(e^{R_1\left(Y_1(1+I_1)^{-1}- X_1\right)}\right) = 1 $$
that $R_1 = 0.0951395$ again by the Matlab program. We have Table 1 with a range of values of $u$.

\begin{center}
Table 1. : Upper  bounds for ruin probability in the insurance discrete-time risk models \eqref{eq1.1}, \eqref{eq1.2} and \eqref{eq1.3}

\begin{tabular}{|c|c|c|c|}
\hline
 $u$ & Model \eqref{eq1.1} & Model \eqref{eq1.2} & Model \eqref{eq1.3} \\
  \hline 
 55&  0.0341836  & 0.5243464  &  0.9736296 \\
 60&  0.0251494  &  0.4474602  &  0.8789496 \\
 65&  0.0185029  &  0.3818480 &    0.7934767 \\
 70&   0.0185029 &  0.3258567 &    0.7163156 \\
 75&   0.0100152 &  0.2780755 &   0.6466580  \\
 80&   0.0073683 &  0.2373006  &   0.5837742 \\
\hline
\end{tabular}
\end{center}


Table 1 presents the values of the upper bounds given by equations \eqref{H1.2},  \eqref{H1.3}, and the Lundberg upper bound for various values of $u$. It is worth noting that the upper bounds derived in this article for risk models under $m-$dependent assumptions depend on the parameter $m$.

\vspace{5pt} 
\noindent \textbf{Acknowledgement}. We would like to thank Professor Bui Khoi Dam for helpful
discussions and valuable comments. This research is funded by University of Finance-Marketing.

\vspace{10pt} 
\noindent \textsc{Nguyen Huy Hoang}  \\ 
Faculty of Data Science,  University of Finance - Marketing, \\ Ho Chi Minh City, Vietnam\\ E-mail: hoangtoancb@ufm.edu.vn

\vspace{10pt} 
\noindent \textsc{Tran Dinh Phung}  \\ 
Faculty of Data Science,  University of Finance - Marketing, \\ Ho Chi Minh City, Vietnam\\ E-mail: td.phung@ufm.edu.vn

\end{document}